\documentclass{amsproc}
\usepackage{amsfonts}

\setcounter{MaxMatrixCols}{10}

\theoremstyle{plain}

\newtheorem{proposition}{Proposition}

\newtheorem{theorem}{Theorem}
\numberwithin{equation}{section}

\begin{document}
\title[A locally compact transitive groupoid without open range map ]{A
second countable locally compact transitive groupoid without open range map}
\author{M\u{a}d\u{a}lina Roxana Buneci}
\address{University Constantin Br\^{a}ncu\c{s}i of T\^{a}rgu-Jiu, Calea
Eroilor No. 30, 210135 T\^{a}rgu-Jiu, Rom\^{a}nia.}
\email{mbuneci@yahoo.com}
\urladdr{http://www.utgjiu.ro/math/mbuneci/}
\thanks{}
\date{}
\subjclass[2010]{Primary 22A22; Secondary 54E15}
\keywords{~transitive groupoid, principal groupoid, locally compact
groupoid, range map}
\dedicatory{}
\thanks{}

\begin{abstract}
Dana P. Williams raised in [Proc. Am. Math. Soc., Ser. B, 2016]
the following question: \ Must a second countable, locally compact,
transitive groupoid have open range map? This paper gives a negative answer
to that question. Although a second countable, locally compact transitive
groupoid $G$ may fail to have open range map, we prove that we can replace
its topology with a topology which is also second countable, locally
compact, and with respect to which $G$ is a topological groupoid whose range
map is open. Moreover, the two topologies generate the same Borel structure
and coincide on the fibres of $G$..
\end{abstract}

\maketitle

\section{Introduction}

In order to construct convolution algebras associated to a locally compact
topological groupoid one needs an analogue of the Haar measure on a locally
compact group. Starting with the work of Jean Renault \cite{re1}, this
analogue is a system of measures, called Haar system, subject to suitable
support, invariance and continuity conditions. According to a result of
Anthony Seda \cite{se3}, the continuity assumption is essential for the
Renault's construction \cite{re1} of the $C^{\ast }$-algebra associated to a
locally compact groupoid. This continuity assumption entails that the range
map is open (\cite[Proposition I.4]{we} or \cite[p. 118]{se3}). \ As
Williams pointed out \cite[Question 3.5]{wi}, while there certainly exist
groupoids that fail to have open range maps, most of these examples  are
group bundles which are as far as from been principal groupoids or
transitive groupoids as possible. This led him to the next question \cite[%
Question 3.5]{wi}: must a second countable, locally compact, transitive
groupoid have open range map? \ In this paper we construct a second
countable, locally compact, transitive and principal groupoid that fails to
have open range map (and hence open domain map). \ 

In addition, we prove that for every second countable, locally compact
topology $\tau $ on a transitive groupoid $G$ making $G$ a topological
groupoid, there is a topology $\tilde{\tau}$ which is also second countable,
locally compact, and with respect to which $G$ is a topological groupoid
with open range map. Moreover, $\tau $ and $\tilde{\tau}$ generate the same
Borel structure and coincide on the fibres of $G$.

We shall use the definition of a topological groupoid given by Jean Renault
in \cite{re1}. For a groupoid $G$, $G^{\left( 2\right) }$ will denote the
set of the composable pairs and $G^{\left( 0\right) }$ its unit space. \ As
usual the inverse map will be written $x\rightarrow x^{-1}\,\left[
:G\rightarrow G\right] $ \ and the product map will be written $\left(
x,y\right) \rightarrow xy\,\left[ :G^{\left( 2\right) }\rightarrow G\right] $%
. For each $x\in G$, $r\left( x\right) =xx^{-1}$, respectively, $d\left(
x\right) =x^{-1}x$ will denote the range, respectively the domain (source)
of $x$ in $G^{\left( 0\right) }$ (thus $r:G\rightarrow G^{\left( 0\right) }$%
, respectively $d:G\rightarrow G^{\left( 0\right) }$ will be the range map,
respectively the domain/source map). For each $u\in G^{\left( 0\right) }$,
the fibre of the range, respectively domain map over $u$ is denoted $%
G^{u}=r^{-1}\left( \left\{ u\right\} \right) $, respectively, $%
G_{u}=d^{-1}\left( \left\{ u\right\} \right) $.

A groupoid $G$ is said to be transitive if for every $u$,$v\in G^{\left(
0\right) }$, there is $x\in G$ such that $r\left( x\right) =u$ and $d\left(
x\right) =v$. A groupoid is called principal if the map $\left( r,d\right)
:G\rightarrow G^{\left( 0\right) }\times G^{\left( 0\right) }$, defined by $%
\left( r,d\right) \left( x\right) =\left( r\left( x\right) ,d\left( x\right)
\right) $ for all $x\in G$, is injective.

If $A$, $B$ $\subset G$ and $x\in G$, one may form the following subsets of $%
G$:%
\begin{eqnarray*}
A^{-1} &=&\left\{ x\in G:\;x^{-1}\in A\right\} \\
AB &=&\left\{ xy:\left( x,y\right) \in G^{\left( 2\right) }\cap \left(
A\times B\right) \right\} \\
xA &=&\{x\}A\text{ and }Ax=A\left\{ x\right\} \text{.}
\end{eqnarray*}

A topological groupoid consists of a groupoid $G$ and a topology compatible
with the groupoid structure. This means that the inverse map $x\mapsto
x^{-1}\;\left[ :G\rightarrow G\right] $ is continuous, as well as the
product map $\left( x,y\right) \mapsto xy\;\left[ :G^{\left( 2\right)
}\rightarrow G\right] $ is continuous, where $G^{\left( 2\right) }$ has the
induced topology induced from $G\times G$. By a locally compact groupoid we
mean a topological groupoid whose topology is locally compact (Hausdorff)

\section{A second countable locally compact transitive groupoid without open
range map\label{tno}}

Let us modify the usual topology of the space of real numbers $\mathbb{R}$
in the points of the form $\frac{3}{2^{n+2}}$ and $\frac{5}{2^{n+2}}$ with
 $n\in \mathbb{N}$. For every $x\in \mathbb{R}$, let 
\begin{equation*}
\mathcal{B}_{x}=\left\{ 
\begin{array}{l}
\left\{ \left[ \frac{3}{2^{n+2}},\frac{3}{2^{n+2}}+\varepsilon \right) \text{%
, }\varepsilon >0\right\} \text{, if }x=\frac{3}{2^{n+2}}\text{ (}n\in 
\mathbb{N}\text{)} \\ 
\left\{ \left( \frac{5}{2^{n+2}}-\varepsilon ,\frac{5}{2^{n+2}}\right] \text{%
, }\varepsilon >0\right\} \text{, if }x=\frac{5}{2^{n+2}}\text{ (}n\in 
\mathbb{N}\text{)} \\ 
\left\{ \left( x-\varepsilon ,x+\varepsilon \right) \text{, }\varepsilon
>0\right\} \text{, if }x\in \mathbb{R}\setminus \bigcup\limits_{n=0}^{\infty
}\left\{ \frac{3}{2^{n+2}},\frac{5}{2^{n+2}}\right\} .%
\end{array}%
\right. 
\end{equation*}%
and%
\begin{equation*}
\mathcal{F}_{x}=\left\{ V\subset \mathbb{R}:\text{there is }U\in \mathcal{B}%
_{x}\text{ such that }U\subset V\right\} 
\end{equation*}%
Let 
\begin{equation*}
\tau _{0}=\left\{ O\subset \mathbb{R}\text{: if }x\in O\text{, then }O\in 
\mathcal{F}_{x}\right\} 
\end{equation*}%
be the unique topology on $X=\mathbb{R}$ with the property that for every $%
x\in X$, $\ \mathcal{F}_{x}$ is the family of neighborhoods of $x$. Then $%
\left( X,\tau _{0}\right) $ is a Hausdorff second countable topological
space. Moreover if $x\in \mathbb{R}\setminus \left(
\bigcup\limits_{n=0}^{\infty }\left\{ \frac{3}{2^{n+2}},\frac{5}{2^{n+2}}%
\right\} \cup \left\{ 0\right\} \right) $ and $\varepsilon >0$ is small
enough such that $\left[ x-\varepsilon ,x+\varepsilon \right] \cap
\bigcup\limits_{n=0}^{\infty }\left\{ \frac{3}{2^{n+2}},\frac{5}{2^{n+2}}%
\right\} =\emptyset $, then $\left[ x-\varepsilon ,x+\varepsilon \right] $
is a compact neighborhood of $x$ with respect to the topology $\tau _{0}$.
Also if $n\in \mathbb{N}$ and $0<\varepsilon <\frac{1}{2^{n+1}}$, then $%
\left[ \frac{3}{2^{n+2}},\frac{3}{2^{n+2}}+\varepsilon \right] $ is a
compact neighborhood of $\frac{3}{2^{n+2}}$\ and $\left[ \frac{5}{2^{n+2}}%
-\varepsilon ,\frac{5}{2^{n+2}}\right] $ is a compact neighborhood of $\frac{%
5}{2^{n+2}}$ .

Let $G=X\times X=\mathbb{R}\times \mathbb{R}$ be the pair groupoid (product: 
$\left( x,y\right) \left( y,z\right) =\left( x,z\right) $, inverse: $\left(
x,y\right) ^{-1}=\left( y,x\right) $). For every $\left( x,y\right) \in G$
let 
\begin{equation*}
\mathcal{B}_{\left( x,y\right) }=\left\{ 
\begin{array}{l}
\left\{ A\times B:\left( x,y\right) \in A\times B\text{,}~A,B\in \tau
_{0}\right\} \text{, if }x\neq 0\text{ and }y\neq 0 \\ 
\left\{ \left\{ 0\right\} \times B:y\in B\text{, }~B\in \tau _{0}\right\} 
\text{, if }x=0\text{ and }y\neq 0 \\ 
\left\{ A\times \left\{ 0\right\} :x\in A\text{, }~A\in \tau _{0}\right\} 
\text{, if }x\neq 0\text{ and }y=0 \\ 
\left\{ \left\{ \left( 0,0\right) \right\} \cup U_{n}:n\in \mathbb{N}%
\right\} \text{, if }\left( x,y\right) =\left( 0,0\right) \text{,} \\ 
\text{where }U_{n}=\bigcup\limits_{k=n}^{\infty }\left[ \frac{3}{2^{k+2}},%
\frac{5}{2^{k+2}}\right] \times \left[ \frac{3}{2^{k+2}},\frac{5}{2^{k+2}}%
\right] \text{ for all }n\in \mathbb{N}\text{.}%
\end{array}%
\right. 
\end{equation*}%
and%
\begin{equation*}
\mathcal{F}_{\left( x,y\right) }=\left\{ V\subset \mathbb{R}\times \mathbb{R}%
:\text{there is }U\in \mathcal{B}_{\left( x,y\right) }\text{ such that }%
U\subset V\right\} \text{.}
\end{equation*}%
Let us endow $G=\mathbb{R}\times \mathbb{R}$ with the unique topology $\tau
_{1}$ with the property that for every $\left( x,y\right) \in G$, $\ 
\mathcal{F}_{\left( x,y\right) }$ is the family of neighborhoods of $\left(
x,y\right) $. It is easy to see that the inverse map is continuous with
respect to $\tau _{1}$. Since for all subsets $A,B,C\subset X=\mathbb{R}$,

\begin{equation*}
\left( A\times C\right) \left( C\times B\right) \subset A\times B
\end{equation*}%
it follows that the product map is continuous in the points of the form $%
\left( \left( x,y\right) ,\left( y,z\right) \right) \in G^{\left( 2\right) }$
with $x\neq 0$ and $z\neq 0$. For every $n\in \mathbb{N}$, let $%
U_{n}=\bigcup\limits_{k=n}^{\infty }\left[ \frac{3}{2^{k+2}},\frac{5}{2^{k+2}%
}\right] \times \left[ \frac{3}{2^{k+2}},\frac{5}{2^{k+2}}\right] $. The
continuity of the product map in $\left( \left( 0,0\right) ,\left(
0,0\right) \right) \,$\ is the consequence of the fact that 
\begin{equation*}
\left( \left\{ \left( 0,0\right) \right\} \cup U_{n}\right) \left( \left\{
\left( 0,0\right) \right\} \cup U_{n}\right) =\left\{ \left( 0,0\right)
\right\} \cup U_{n}\text{.}
\end{equation*}%
Since for all subsets $B\subset X=\mathbb{R}$,%
\begin{equation*}
\left( \left\{ 0\right\} \times B\right) \left( B\times \left\{ 0\right\}
\right) =\left\{ \left( 0,0\right) \right\} \subset \left\{ \left(
0,0\right) \right\} \cup U_{n}
\end{equation*}%
it follows that the product map is continuous is continuous in the points of
the form $\left( \left( 0,y\right) ,\left( y,0\right) \right) $ such that $%
y\neq 0$. The fact that for all subsets $B\subset X=\mathbb{R}$,%
\begin{eqnarray*}
\left( \left\{ \left( 0,0\right) \right\} \cup U_{n}\right) \left( \left\{
0\right\} \times B\right) &=&\left\{ \left( 0,0\right) \right\} \left(
\left\{ 0\right\} \times B\right) \\
&=&\left\{ 0\right\} \times B
\end{eqnarray*}%
implies \ that the product map is continuous in the points of the form $%
\left( \left( 0,0\right) ,\left( 0,y\right) \right) $ such that $y\neq 0$.
Similarly, the product map is continuous in the points of the form $\left(
\left( y,0\right) ,\left( 0,0\right) \right) $ such that $y\neq 0$.
Therefore $\left( G,\tau _{1}\right) $ is a topological groupoid.

For every $x\in \mathbb{R}\setminus \left\{ 0\right\} $, let $K_{x}$ be a
compact neighborhood of $x$ with respect to $\tau _{0}$. Then $K_{x}\times
K_{y}$ is a compact neighborhood of $\left( x,y\right) $ with respect to the
topology $\tau _{1}$. For every $x\in \mathbb{R}\setminus \left\{ 0\right\} $%
, $\left\{ 0\right\} \times K_{x}$, respectively $K_{x}\times \left\{
0\right\} $ is a compact neighborhood of $\left( 0,x\right) $, respectively $%
\left( x,0\right) $ with respect to $\tau _{1}$. Let us prove that for each $%
m\in \mathbb{N}$, 
\begin{equation*}
K_{m}=\left\{ \left( 0,0\right) \right\} \bigcup \left(
\bigcup\limits_{k=m}^{\infty }\left[ \frac{3}{2^{k+2}},\frac{5}{2^{k+2}}%
\right] \times \left[ \frac{3}{2^{k+2}},\frac{5}{2^{k+2}}\right] \right) 
\end{equation*}%
is a compact neighborhood of $\left( 0,0\right) $. Let $\left( \left(
x_{n},y_{n}\right) \right) _{n}\in K_{m}$. If 
\begin{equation*}
\left\{ n\in \mathbb{N}:\left( x_{n},y_{n}\right) =\left( 0,0\right)
\right\} 
\end{equation*}%
is infinite, then $\left( \left( x_{n},y_{n}\right) \right) _{n}$ has a
subsequence converging to \thinspace $\left( 0,0\right) \in K_{m}$. If $%
\left\{ n\in \mathbb{N}:\left( x_{n},y_{n}\right) =\left( 0,0\right)
\right\} $ is finite, then there is an integer $n_{0}\geq m$ such that $%
\left( x_{n},y_{n}\right) \in \bigcup\limits_{k=n}^{\infty }\left[ \frac{3}{%
2^{k+2}},\frac{5}{2^{k+2}}\right] \times \left[ \frac{3}{2^{k+2}},\frac{5}{%
2^{k+2}}\right] $ for all $\ n\geq n_{0}$. In this case for every $n\geq
n_{0}$, there are $u_{n}$, $v_{n}\in \left[ 3,5\right] $ and $k_{n}\in 
\mathbb{N}$, such that $x_{n}=\frac{u_{n}}{2^{k_{n}+2}}$ and $y_{n}=\frac{%
v_{n}}{2^{k_{n}+2}}$. If $\left( k_{n}\right) _{n}$ is unbounded, then $%
\left( k_{n}\right) _{n}$ has a subsequence that diverges to $\infty $ and
hence $\left( x_{n}\right) _{n}$ and $\left( y_{n}\right) _{n}$ have
subsequences which converge to $0$ in the usual topology on $\mathbb{R}$.
Since for all $n$, 
\begin{multline*}
K_{m}\cap \left( \left( -\frac{5}{2^{n+2}},\frac{5}{2^{n+2}}\right) \times
\left( -\frac{5}{2^{n+2}},\frac{5}{2^{n+2}}\right) \right) \subset  \\
\subset \left\{ \left( 0,0\right) \right\} \bigcup
\bigcup\limits_{k=n}^{\infty }\left[ \frac{3}{2^{k+2}},\frac{5}{2^{k+2}}%
\right] \times \left[ \frac{3}{2^{k+2}},\frac{5}{2^{k+2}}\right] \text{,}
\end{multline*}%
it follows that $\left( \left( x_{n},y_{n}\right) \right) _{n}$ has a
subsequence converging to $\left( 0,0\right) $ with respect to $\tau _{1}$.
If $\left( k_{n}\right) _{n}$ is bounded, then it has a convergent
subsequence in the usual topology on $\mathbb{R}$, or equivalently a
stationary subsequence. Also $\left( u_{n}\right) _{n}$ and $\left(
v_{n}\right) _{n}$ have convergent subsequences in the usual topology on $%
\mathbb{R}$. Thus $\left( x_{n}\right) _{n}$ (respectively, $\left(
y_{n}\right) _{n}$) has a subsequence converging to $\frac{u}{2^{p+2}}$%
(respectively, $\frac{v}{2^{p+2}}$) in the usual topology on $\mathbb{R}$.
Since $u,v\in \left[ 3,5\right] $ , it follows that every neighborhood of $%
\left( \frac{u}{2^{p+2}},\frac{v}{2^{p+2}}\right) $ with respect to the
topology $\tau _{1}$ contains a set of the form $A\times B$, with $A$, $B\in
\tau _{0}$ and $\left( \frac{u}{2^{p+2}},\frac{v}{2^{p+2}}\right) \in $ $%
A\times B$. \ Therefore $\left( \left( x_{n},y_{n}\right) \right) _{n}$ has
a subsequence converging to $\left( \frac{u}{2^{p}},\frac{v}{2^{p}}\right) $
in the topology $\tau _{1}$ of $G=X\times X=\mathbb{R}\times \mathbb{R}$.
Thus $K_{m}$ is a compact neighborhood of $\left( 0,0\right) $.

Therefore $\left( G,\tau _{1}\right) $ is a second countable locally compact
groupoid. Since for any $B\in \tau _{0}$, 
\begin{equation*}
r\left( \left\{ 0\right\} \times B\right) =\left\{ \left( 0,0\right)
\right\} 
\end{equation*}%
is not open in $G^{\left( 0\right) }=\left\{ \left( x,x\right) :x\in
X\right\} $, it follows that the range map is not open (and hence the domain
map $d$ is not open).

\section{Replacing the topology of a locally compact transitive groupoid with a locally transitive topology}

We prove that for every second countable, locally compact, transitive
groupoid $G$ there is a second countable, locally compact topology $\tilde{%
\tau}$ making $G$ a topological groupoid with open range map. Moreover, the
original topology on $G$ and $\tilde{\tau}$ generate the same Borel
structure and coincide on the $r$-fibres and $d$-fibres of $G$. The topology 
$\tilde{\tau}$ is in fact the topology $\tau _{\mathcal{W}}$ introduced in 
\cite[Definition 3.1]{bu4}, where $\mathcal{W}$ is a suitable $G$%
-uniformity. Let us recall that a $G$-uniformity (in the sense of \cite[%
Definition 2.1]{bu4}) is a collection $\left\{ W\right\} _{W\in \mathcal{W}}$%
\ of subsets of a groupoid $G$ satisfying the following conditions:

\begin{enumerate}
\item $G^{\left( 0\right) }\subset W\subset G$ for all $W\in \mathcal{W}$.

\item If $W_{1}$, $W_{2}\in \mathcal{W}$, then there is $W_{3}\subset
W_{1}\cap W_{2}$ such that $W_{3}\in \mathcal{W}$.

\item For every $W_{1}\in \mathcal{W}$ there is $W_{2}\in \mathcal{W}$ such
that $W_{2}W_{2}\subset W_{1}$.

\item $W=W^{-1}$for all $W\in \mathcal{W}$.
\end{enumerate}

If $G$ is groupoid endowed with a topology, then a $G$-uniformity $\mathcal{W%
}$ is said to be compatible with the topology of the $r$-fibres (in the
sense of \cite[Definition 3.4]{bu4}) if for every $u\in G^{\left( 0\right) }$
and every open neighborhood $U$ of $u$, there is $W\in \mathcal{W}$ such
that $W\cap G^{u}\subset U\cap G^{u}$ and $u$ is in the interior of $W\cap
G^{u}$ with respect to the topology on $G^{u}$ coming from $\left( G,\tau
\right) .$

A subset $K$ of $G$ is diagonally compact (in the sense of \cite[p. 10]{mw})
if $K\cap r^{-1}\left( L\right) $ and $K\cap d^{-1}\left( L\right) $ are
compact whenever $L$ is a compact subset of $G^{\left( 0\right) }$.

\begin{proposition}
\label{Wn}If $G$ is a second countable, locally compact Hausdorff groupoid,
then $G$ admits a countable $G$-uniformity $\left\{ W_{n}\right\} _{n\in 
\mathbb{N}}$ compatible with the topology of the $r$-fibres such that for
every $n\in \mathbb{N}$, $W_{n}$ is a \ diagonally compact neighborhood of $%
G^{\left( 0\right) }$
\end{proposition}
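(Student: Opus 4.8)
The plan is to build the sets $W_n$ by hand from a countable base of the topology together with a compatible proper-type structure on the $r$-fibres. First I would fix a countable basis $\{O_k\}_{k\in\mathbb{N}}$ for the topology of $G$, and an increasing sequence $\{L_m\}_{m\in\mathbb{N}}$ of compact subsets of $G^{(0)}$ with $\bigcup_m L_m = G^{(0)}$ and $L_m \subset \operatorname{int}(L_{m+1})$ (possible since $G^{(0)}$ is locally compact, second countable, hence $\sigma$-compact and hemicompact). The key point is to produce, for each $n$, a set $W_n$ that is simultaneously (a) a neighborhood of the unit space, (b) symmetric, (c) diagonally compact, and (d) small enough near each unit $u$ when intersected with $G^u$; and then to arrange the inclusions $W_{n+1}W_{n+1}\subset W_n$ and $W_{n+1}\subset W_n$ so that conditions (1)--(4) of a $G$-uniformity hold.

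The main construction step is to exhibit, for each open $U\supset G^{(0)}$ and each compact $L\subset G^{(0)}$, a diagonally compact symmetric neighborhood of $G^{(0)}$ contained in $U$ after intersecting with $r^{-1}(L)\cup d^{-1}(L)$, and controlling $W\cap G^u$ for $u\in L$. To do this I would use that $G$ is second countable locally compact Hausdorff to get, around each unit $u$, a compact neighborhood $K_u$ in $G$ with $r(K_u), d(K_u)$ contained in a prescribed open set; a countable subfamily covers $r^{-1}(L)$ by the Lindelöf property, and a careful union-then-symmetrize-then-intersect-with-$r^{-1}(L')$ argument (for a slightly larger compact $L'$) yields the required diagonally compact set. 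Here the transitivity or principality hypotheses are not needed; only second countability and local compactness are used, exactly as in the statement. I would then iterate: given $W_n$, use condition (3)'s requirement to choose inside $W_n$ a symmetric diagonally compact neighborhood $W_{n+1}$ of $G^{(0)}$ with $W_{n+1}W_{n+1}\subset W_n$ — the product being controlled because $W_{n+1}$ can be taken inside a set of the form $r^{-1}(L_n)$-localized pieces on which $r,d$ behave properly — and simultaneously shrink so that $W_{n+1}\cap G^u \subset O_{k}\cap G^u$ whenever $O_k$ is the $(n+1)$-st basic open set meeting $G^{(0)}$, so that compatibility with the $r$-fibre topology (Definition 3.4 of \cite{bu4}) holds in the limit.

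Checking the axioms at the end is then routine: (1) and (4) are built in; (2) follows because the $W_n$ are nested, so $W_3 = W_{\max(n_1,n_2)+1}$ works; (3) is exactly the inclusion arranged at each stage; diagonal compactness of each $W_n$ is preserved by the construction since we always intersect with preimages of compacta and take closed subsets of diagonally compact sets; and compatibility with the $r$-fibre topology follows because every basic open neighborhood of a unit $u$, intersected with $G^u$, eventually contains some $W_n\cap G^u$ with $u$ interior to it in $G^u$, by our shrinking schedule. The main obstacle I anticipate is the simultaneous bookkeeping — ensuring that the same sequence $\{W_n\}$ meets the multiplicative shrinking condition $W_{n+1}W_{n+1}\subset W_n$, stays diagonally compact, and still gets "fine enough" on the $r$-fibres to be compatible — since controlling products $W_{n+1}W_{n+1}$ requires knowing that $d$ maps the relevant local pieces of $W_{n+1}$ into sets over which $W_{n+1}$ is again small; this is handled by always working inside $r^{-1}(L_m)\cap d^{-1}(L_m)$ for a well-chosen exhaustion and using that the map $(r,d)$ is continuous with compact-neighborhood-friendly behavior in a locally compact Hausdorff groupoid.
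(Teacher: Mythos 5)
Your overall architecture (an inductive construction of nested, symmetric, diagonally compact neighborhoods of $G^{(0)}$ with $W_{n+1}W_{n+1}\subset W_{n}$) is the same as the paper's, and your Lindel\"{o}f-type covering argument for producing diagonally compact neighborhoods inside a prescribed open neighborhood of $G^{(0)}$ is essentially the standard one the paper cites. However, the crux of the proposition is precisely the step you defer to ``careful bookkeeping'': given a neighborhood $V$ of $G^{(0)}$, produce a neighborhood $U$ of $G^{(0)}$ with $UU\subset V$. Your justification --- that the product ``can be controlled'' by working inside $r^{-1}(L_m)\cap d^{-1}(L_m)$ for a compact exhaustion --- does not address the actual difficulty. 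Continuity of multiplication at a pair of units $(u,u)$ gives a neighborhood $A_u$ of $u$ with $A_uA_u\subset V$, but for $x\in A_u$ and $y\in A_v$ with $u\neq v$ and $d(x)=r(y)$, neither local choice controls $xy$; intersecting with $r^{-1}(L_m)\cap d^{-1}(L_m)$ only constrains where $xy$ lands under $(r,d)$, not whether $xy\in V$. Gluing the local data into a single neighborhood of the whole unit space requires a star-refinement/locally finite cover argument; the paper obtains this from paracompactness by invoking Ramsay's lemma (the cited pages 361--362 of \cite{ra2}). Without this ingredient your induction cannot get off the ground, so this is a genuine gap rather than an omitted routine verification.

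A second, repairable, problem is your mechanism for compatibility with the $r$-fibre topology. You propose to arrange $W_{n+1}\cap G^{u}\subset O_{k}\cap G^{u}$ where $O_k$ is the $(n+1)$-st basic open set meeting $G^{(0)}$; if this is meant for all units $u\in O_k$, it amounts to $W_{n+1}\cap r^{-1}(O_k\cap G^{(0)})\subset O_k$, and no open neighborhood of $G^{(0)}$ with that property need exist (take units $v$ on the boundary of $O_k\cap G^{(0)}$ approached by points $x\notin O_k$ with $r(x)\in O_k$). The paper sidesteps this entirely by using metrizability: setting $D_n=\{x\in G:\,d(x,r(x))<\tfrac{1}{n+1}\}$ and forcing $W_n\subset D_n$ makes $W_n\cap G^{u}$ shrink into every neighborhood of $u$ uniformly over all $u$ at once. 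Alternatively, since each $W_n\cap G^{u}$ is compact (by diagonal compactness) and the $W_n$ are nested, it would suffice to arrange $\bigcap_n W_n=G^{(0)}$, which your basis enumeration could deliver if rephrased as $W_{n+1}\cap\overline{O_{k}}=\emptyset$ for basic sets with closure disjoint from $G^{(0)}$; but as written your shrinking schedule does not establish compatibility.
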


\begin{proof}
Since $G$ is a second countable, locally compact Hausdorff space, it follows
that $G$ is metrizable. Let us denote the metric by $d$. Also since $G$ is a
second countable, locally compact Hausdorff space

it follows that $G$ as well as $G^{\left( 0\right) }$ are paracompact
spaces. Thus $G$ has a fundamental system of diagonally compact
neighborhoods of $G^{(0)}$ (by \cite[Lemma 2.10/p. 10]{mw} or the proof \ of 
\cite[Proposition 1.9]{re1}). For each $n\in \mathbb{N}\setminus \left\{
0\right\} $ let us write%
\begin{equation*}
D_{n}=\left\{ x\in G:d\left( x,r\left( x\right) \right) <\frac{1}{n+1}%
\right\} \text{.}
\end{equation*}

Let $W_{0}$ be a diagonally compact symmetric neighborhood of $G^{(0)}$ such
that $W_{0}\subset D_{0}$. Inductively we construct a $G$-uniformity $%
\left\{ W_{n}\right\} _{n\in \mathbb{N}}$\ consisting in diagonally compact
symmetric neighborhoods of $G^{\left( 0\right) }$ such that $W_{n}\subset
D_{n}$ for all $n\in \mathbb{N}$. Suppose a symmetric neighborhood $W_{n}$
of $G^{\left( 0\right) }$ has already been built. \ Let $V_{n}$ be a
diagonally compact neighborhood of $G^{(0)}$ such that $V_{n}\subset
D_{n+1}\cap W_{n}$. Since $G$ is paracompact, according to \cite[p. 361-362]%
{ra2}, there is a neighborhood $U_{n}$ of $G^{(0)}$ such that $%
U_{n}U_{n}\subset V_{n}$. Let $W_{n+1}$ be a diagonally compact neighborhood
of $G^{(0)}$ such that $W_{n+1}\subset U_{n}$. \ Replacing $W_{n+1}$ with $%
W_{n+1}\cap W_{n+1}^{-1}$, we may assume that $W_{n+1}=W_{n+1}^{-1}$. Thus
we obtain a diagonally compact symmetric neighborhood of $G^{(0)}$ such that%
\begin{equation*}
W_{n+1}\subset W_{n+1}W_{n+1}\subset U_{n}U_{n}\subset V_{n}\subset
D_{n+1}\cap W_{n}\text{.}
\end{equation*}

Let us remark that for every $u\in G^{\left( 0\right) }$ we have%
\begin{equation*}
W_{n}\cap G^{u}\subset D_{n}\cap G^{u}=\left\{ x\in G^{u}:d\left( x,u\right)
\,<\frac{1}{n+1}\right\}
\end{equation*}

for all $n\in \mathbb{N}$. \ Consequently, $\left\{ W_{n}\right\} _{n\in 
\mathbb{N}}$\ is compatible with the topology of the $r$-fibres.
\end{proof}

Let us recall that $A\subset G$ is open with respect to the topology $\tau _{%
\mathcal{W}}$ \cite[Definition 3.1]{bu4} (respectively, $\tau _{\mathcal{W}%
}^{r}$ \cite[p. 59]{bu5}) associated to a $G$-uniformity $\mathcal{W}$ \cite[%
Definition 2.1]{bu4} if and only if for every $x\in A$ there is $W_{x}\in 
\mathcal{W}$ such that $W_{x}xW_{x}\subset A$ (respectively, $xW_{x}\subset A
$). If $G$ is a topological groupoid and if $\mathcal{W}$ is a $G$%
-uniformity compatible with the topology of the $r$- fibres, then by \cite[%
Proposition 3.6]{bu4} \ for every $W_{1}\in \mathcal{W}$ and $x\in G$ there
is $W_{2}\in \mathcal{W}$ such that $W_{2}\cap G_{d\left( x\right)
}^{d\left( x\right) }\subset x^{-1}W_{1}x$ $\ $and by \cite[Proposition 3.7]%
{bu4}, $G$ endowed with $\tau _{\mathcal{W}}$ is a topological groupoid.
Moreover the topologies induced by $\tau _{\mathcal{W}}^{r}$ and $\tau _{%
\mathcal{W}}$ on $r$-fibres coincide. According \cite[Proposition 3.8]{bu4},
the compatibility of the $G$-uniformity $\mathcal{W}$ with the topology of $G
$ ensures that the topologies induced by $\tau _{\mathcal{W}}$ and the
original topology of $G$ on the $r$-fibres $G^{u}$ coincide. \ Thus \ for
each $u\in G^{\left( 0\right) }$ and each $x\in G^{u}$, $\left\{ xW\right\}
_{W\in \mathcal{W}}$ \ is a neighborhood basis (local basis) for $x$ with
respect to the topology induced from $G$ on $G^{u}$.

\begin{proposition}
\label{dens}Let $G$ be a topological transitive groupoid endowed with a $G$%
-uniformity $\mathcal{W}$ compatible with the topology of the $r$-fibres and
let $u\in G^{\left( 0\right) }$. If $\ S\subset G^{u}$ is a dense subset of $%
G^{u}$ with respect to the topology induced from $G$, then $S^{-1}S$ is a
dense subset of $\ G$ with respect to the topology $\tau _{\mathcal{W}}$
\end{proposition}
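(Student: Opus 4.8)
The plan is to prove that every nonempty $\tau_{\mathcal{W}}$-open subset of $G$ meets $S^{-1}S$. By the characterization of $\tau_{\mathcal{W}}$-open sets recalled above, if $A$ is $\tau_{\mathcal{W}}$-open and $x\in A$, then there is $W\in\mathcal{W}$ with $WxW\subset A$. Hence it is enough to show that $WxW\cap S^{-1}S\neq\emptyset$ for every $x\in G$ and every $W\in\mathcal{W}$, and the density of $S^{-1}S$ in $\left(G,\tau_{\mathcal{W}}\right)$ will follow at once.

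So fix $x\in G$ and $W\in\mathcal{W}$. First I would use transitivity of $G$ to pick $t_{1}\in G$ with $r\left(t_{1}\right)=u$ and $d\left(t_{1}\right)=r\left(x\right)$, and then set $t_{2}:=t_{1}x$. A direct check gives $r\left(t_{2}\right)=u$ (so $t_{1},t_{2}\in G^{u}$), $d\left(t_{2}\right)=d\left(x\right)$, and $t_{1}^{-1}t_{2}=t_{1}^{-1}t_{1}x=x$. This is the only real idea in the proof: using transitivity together with the groupoid structure to realize $x$ as $t_{1}^{-1}t_{2}$ with \emph{both} $t_{1}$ and $t_{2}$ lying in the single fibre $G^{u}$, which is exactly the fibre in which $S$ is assumed dense.

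Next, since $t_{1},t_{2}\in G^{u}$, the families $\left\{t_{1}W\right\}_{W\in\mathcal{W}}$ and $\left\{t_{2}W\right\}_{W\in\mathcal{W}}$ are neighborhood bases of $t_{1}$ and $t_{2}$ in $G^{u}$ with the topology induced from $G$ (as recalled just before the statement); note also that $t_{1}W\subset G^{u}$ and $t_{2}W\subset G^{u}$. Because $S$ is dense in $G^{u}$, the nonempty open sets $\mathrm{int}\left(t_{1}W\right)$ and $\mathrm{int}\left(t_{2}W\right)$ meet $S$, so we may choose $s_{1}\in S\cap t_{1}W$ and $s_{2}\in S\cap t_{2}W$, say $s_{1}=t_{1}w_{1}$ and $s_{2}=t_{2}w_{2}$ with $w_{1},w_{2}\in W$. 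Using $W=W^{-1}$ and $t_{1}^{-1}t_{2}=x$ I would then compute
\begin{equation*}
s_{1}^{-1}s_{2}=w_{1}^{-1}t_{1}^{-1}t_{2}w_{2}=w_{1}^{-1}xw_{2},
\end{equation*}
after verifying that every product written here is composable: the composability of $w_{1}^{-1}$ with $x$ and of $x$ with $w_{2}$ comes from $r\left(w_{1}\right)=d\left(t_{1}\right)=r\left(x\right)$ and $r\left(w_{2}\right)=d\left(t_{2}\right)=d\left(x\right)$. Hence $s_{1}^{-1}s_{2}\in WxW$, so $s_{1}^{-1}s_{2}\in WxW\cap S^{-1}S$, as desired.

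I do not expect a serious obstacle here: the argument is essentially bookkeeping once the decomposition $x=t_{1}^{-1}\left(t_{1}x\right)$ is in hand. The step that needs the most care is tracking the composability conditions for the groupoid products throughout the manipulation $s_{1}^{-1}s_{2}=w_{1}^{-1}xw_{2}$, and checking that $t_{1}W$, $t_{2}W$ really are neighborhoods of $t_{1}$, $t_{2}$ inside $G^{u}$ so that density of $S$ in $G^{u}$ can be applied.
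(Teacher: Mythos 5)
Your proof is correct and follows essentially the same route as the paper's: the paper also uses transitivity to pick an element of $G^{u}$ over $r(y)$ (your $t_{1}$), observes that its product with $y$ is again in $G^{u}$ (your $t_{2}=t_{1}x$), approximates both by points of $S$ using the neighborhood bases $\left\{ xW\right\} _{W\in \mathcal{W}}$ on $G^{u}$, and lands in $WyW$. The only difference is notational bookkeeping, so there is nothing substantive to add.
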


\begin{proof}
Let $W\in \mathcal{W}$ and $y\in G$. Since $G$ is transitive, it follows
that there is $x\in G^{u}$ such that $d\left( x\right) =r\left( y\right) $.
The density of $S$ implies that there is $s\in S$ such that $s\in xW$ or
equivalently, $x\in sW^{-1}=sW$. Using again the density of $S$ in $G^{u}$
and the fact that $xy\in G^{u}$, \ it follows that there is $t\in S$ such
that $t\in xyW$. Consequently, $xy\in tW^{-1}=tW$. Therefore $y\in
x^{--1}tW\subset Ws^{-1}tW$ or equivalently, $s^{-1}t\in WyW$. Thus any
neighborhood (with respect to $\tau _{\mathcal{W}}$) of $y$ contains at
least one point from $S^{-1}S$.
\end{proof}

A topological groupoid is said to be locally transitive \cite[p. 119]{se3}
(or groupo\"{\i}de microtransitif \cite{eh}) if for every $u\in G^{\left(
0\right) }$ the map $r_{u}$ is open, where $r_{u}:G_{u}\rightarrow G^{\left(
0\right) }$ is defined by $r_{u}\left( x\right) =r\left( x\right) $ for all $%
x\in G_{u}$ and $G_{u}$ is endowed with the topology coming from $G$. Hence
the maps $d_{u}$ are open, where $d_{u}:G^{u}\rightarrow G^{\left( 0\right) }
$, $d_{u}\left( x\right) =d\left( x\right) $ for all $x\in G^{u}$. \
Obviously, every locally transitive groupoid has open range and domain maps.
Conversely, according \cite[Theorem 2.2 A amd Theorem 2.2 N]{mrw} or \cite[%
Theorem 3.2]{ra2}, if $G$ is a second countable, locally compact, transitive
groupoid with open range map, then $G$ is locally transitive. The example
constructed in Section \ref{tno} demonstrates that there are second
countable, locally compact, transitive groupoids which are not locally
transitive. The following theorem shows that if $G$ is a second countable,
locally compact, transitive groupoid $G$, then we can eventually replace the
topology of $G$ with a second countable, locally compact topology $\tilde{%
\tau}$ making $G$ a locally transitive groupoid. In addition, the original
topology on $G$ ant $\tilde{\tau}$ generate the same Borel structure and
coincide on the $r$-fibres (hence on $d$-fibres) of $G$.

\begin{theorem}
Let $G$ be a transitive groupoid endowed with second countable locally
compact Hausdorff \ topology $\tau $ making $G$ a topological groupoid. Then
the topology $\tau $ of $G$ can be replaced with a topology $\tilde{\tau}$
such that:

\begin{enumerate}
\item $G$ is a (topological) locally transitive groupoid with respect to the
topology $\tilde{\tau}$ (hence $G$ has open range map with respect to $%
\tilde{\tau}$).

\item The topology $\tilde{\tau}$ is in general finer than $\tau $. However $%
\tau $ and $\tilde{\tau}$ coincide if  $\left( G,\tau \right) $ is locally
transitive (i.e. for every $u\in G^{\left( 0\right) }$, $r|_{G_{u}}$ is open
with respect to the topology induced by $\tau $ on $G_{u}$).

\item The topologies induced by $\tau $ and $\tilde{\tau}$ on $r$-fibres
(respectively, on $d$-fibres) of $G$ coincide.

\item The topology $\tilde{\tau}$ is second countable and locally compact
Hausdorff.

\item The topologies $\tau $ and $\tilde{\tau}$ generate the same Borel
structure on $G$ (the Borel sets of a topological space are taken to be the $%
\sigma $-algebra generated by the open sets).
\end{enumerate}
\end{theorem}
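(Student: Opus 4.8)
The plan is to build $\tilde\tau$ as the topology $\tau_{\mathcal W}$ associated to a suitable $G$-uniformity $\mathcal W$, and then verify the five listed properties one by one. The first step is to fix the uniformity: by Proposition \ref{Wn} (applied to $(G,\tau)$), there is a countable $G$-uniformity $\mathcal W=\{W_n\}_{n\in\mathbb N}$ compatible with the topology of the $r$-fibres, with each $W_n$ a diagonally compact neighborhood of $G^{(0)}$. Set $\tilde\tau=\tau_{\mathcal W}$. By \cite[Proposition 3.7]{bu4} (quoted in the excerpt) $(G,\tilde\tau)$ is a topological groupoid, and by \cite[Proposition 3.8]{bu4} the topologies induced by $\tau$ and $\tilde\tau$ on each $r$-fibre $G^u$ coincide; since the inverse map is a homeomorphism for both $\tau$ and $\tilde\tau$, the same holds on $d$-fibres. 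This immediately gives item (3). Since $\tilde\tau$ is finer than $\tau$ on the whole space but agrees on fibres, and since $r$ and $d$ restricted to fibres are controlled, one also gets item (2): if $(G,\tau)$ is already locally transitive, then $r|_{G_u}$ is open, which one checks forces $\tau_{\mathcal W}=\tau$ (every $\tau$-open set already satisfies the defining condition of $\tau_{\mathcal W}$ because of openness of the fibrewise range maps and compatibility of $\mathcal W$; conversely $\tilde\tau$ refines $\tau$ always).

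For item (1), I would prove local transitivity directly. Fix $u\in G^{(0)}$; I must show $r_u:G_u\to G^{(0)}$ is open, equivalently (passing to inverses) $d_u:G^u\to G^{(0)}$ is open, equivalently: for every $x\in G^u$ and every $W\in\mathcal W$, $d(xW)$ is a $\tau_{\mathcal W}$-neighborhood of $d(x)$ in $G^{(0)}$. Here I want to exploit Proposition \ref{dens}: choosing a countable $\tau$-dense subset $S$ of $G^u$ (which exists since $G^u$ is second countable), the set $S^{-1}S$ is $\tau_{\mathcal W}$-dense in $G$, hence $d(S^{-1}S)=d(S)$ is dense in $G^{(0)}$ with respect to the relative $\tau_{\mathcal W}$-topology. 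Combined with transitivity (so $d(G^u)=G^{(0)}$) and the fact that on the $r$-fibre $G^u$ the original topology is recovered, I expect to conclude that $r_u$ is an open map onto $G^{(0)}$: the key is that a $\tau_{\mathcal W}$-neighborhood of a unit $v=d(x)$ contains a set of the form (projection of) $WxW\cap G^{(0)}$-type pieces, and the density/transitivity argument shows these are carried onto neighborhoods. Alternatively — and this may be cleaner — I would invoke the converse direction already cited in the excerpt: it suffices to show that $(G,\tilde\tau)$ is a second countable, locally compact, transitive topological groupoid with open range map, and then \cite[Theorem 2.2]{mrw} or \cite[Theorem 3.2]{ra2} gives local transitivity for free. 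Transitivity is unchanged (it is purely algebraic); second countability and local compactness are item (4); so the crux reduces to showing $r:(G,\tilde\tau)\to(G^{(0)},\tilde\tau|_{G^{(0)}})$ is open.

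For item (4), second countability of $\tilde\tau$: since $\mathcal W=\{W_n\}$ is countable and $G^{(0)}$ (with either topology, as they agree on $G^{(0)}$ — one should check this, it follows from compatibility) is second countable, one builds a countable base for $\tau_{\mathcal W}$ from $\{W_n x W_n : x\ \text{in a countable }\tau\text{-dense set}\}$, after verifying these are $\tau_{\mathcal W}$-open or have $\tau_{\mathcal W}$-open interiors containing $x$; this is a standard second-countability bootstrap using that $\tau_{\mathcal W}$ refines $\tau$ only "mildly". Local compactness: each $W_n$ is diagonally compact, and for a point $x$ with $r(x)=u$, the set $W_n\cap r^{-1}(L)$ for $L$ a compact $\tau$-neighborhood of $u$ in $G^{(0)}$ is $\tau$-compact, and one argues it is $\tau_{\mathcal W}$-compact as well (compactness can only be helped by passing to a coarser image, but here we need $\tau_{\mathcal W}$-compactness of a $\tau$-compact set — this needs the two topologies to coincide on that set, which follows because $r^{-1}(L)$ is contained in finitely-many-fibre-like structure; more carefully, on $r^{-1}(\{u\})=G^u$ they agree, and a diagonal-compactness + fibrewise-agreement argument promotes this). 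This is the step I expect to be the main obstacle: verifying that diagonally compact $\tau$-neighborhoods of $G^{(0)}$ remain compact and remain neighborhoods in the finer topology $\tilde\tau$, since finer topologies generally destroy both properties; the diagonal compactness condition (each piece lives over a compact base where the fibrewise topologies agree) is precisely what rescues it, but making this rigorous requires care. Finally, item (5): $\tilde\tau\supseteq\tau$ gives one inclusion of Borel $\sigma$-algebras; for the reverse, since $\tilde\tau$ has a base of the form $W_n x W_n$ with $W_n$ diagonally compact and Borel in $(G,\tau)$, every $\tilde\tau$-open set is a countable union of $\tau$-Borel sets, hence $\tau$-Borel — so the two $\sigma$-algebras coincide.
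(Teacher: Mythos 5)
Your overall strategy coincides with the paper's: take the countable $G$-uniformity $\mathcal{W}=\left\{ W_{n}\right\}$ from Proposition \ref{Wn}, set $\tilde{\tau}=\tau _{\mathcal{W}}$, quote \cite{bu4} for items (1)--(3), build a countable base out of sets $W_{k}yW_{k}$, and use diagonal compactness for item (5). The genuine gap is in your second-countability argument. You propose taking $y$ to range over ``a countable $\tau$-dense set'' and call the rest a standard bootstrap, but what the base argument actually requires is a countable set that is dense for the \emph{finer} topology $\tau _{\mathcal{W}}$: given $x\in A\in \tau _{\mathcal{W}}$ with $W_{k}xW_{k}\subset A$, you must find $y$ in your countable set with $y\in \overset{\circ }{W}_{r}x\overset{\circ }{W}_{r}$ (so that $x\in W_{r}yW_{r}\subset W_{k}xW_{k}$), and $\overset{\circ }{W}_{r}x\overset{\circ }{W}_{r}$ is a basic $\tau _{\mathcal{W}}$-neighborhood that in general contains no nonempty $\tau$-open set (its $r$-image is only $r\left( W_{r}\cap G_{r\left( x\right) }\right)$, which is not a neighborhood of $r(x)$ precisely when $\left( G,\tau\right)$ fails to be locally transitive). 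So $\tau$-density buys you nothing, and ``$\tau _{\mathcal{W}}$ refines $\tau$ only mildly'' is not a substitute for an argument. This is exactly the point where transitivity and Proposition \ref{dens} must be used: the paper takes a countable set $\left\{ x_{n}\right\}$ dense in a single fibre $G^{u}$ (where $\tau$ and $\tau _{\mathcal{W}}$ agree, so density is unambiguous) and concludes from Proposition \ref{dens} that $\left\{ x_{m}^{-1}x_{n}\right\}$ is $\tau _{\mathcal{W}}$-dense in $G$, yielding the base $\left\{ \overset{\circ }{W}_{k}x_{m}^{-1}x_{n}\overset{\circ }{W}_{k}\right\}$. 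You cite Proposition \ref{dens} only inside your inconclusive discussion of item (1), not where it is indispensable; since item (5) rests on the countable base, it inherits the same gap.

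Two further points. Your item (1) is not actually established on either of your routes: the first ends with ``I expect to conclude,'' and the second reduces local transitivity to openness of the range map for $\tilde{\tau}$, which you then do not prove; the paper settles this by invoking \cite{bu4} (Propositions 3.6 and 3.7), and you should either do the same or prove openness of $r$ directly from the definition of $\tau _{\mathcal{W}}$ together with compatibility. Similarly, in item (2) the direction you sketch (every $\tau$-open set satisfies the defining condition of $\tau _{\mathcal{W}}$) gives $\tau \subset \tilde{\tau}$, which holds in general; the coincidence under local transitivity needs the opposite inclusion $\tilde{\tau}\subset \tau$, i.e.\ that $WxW$ is a $\tau$-neighborhood of $x$. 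On local compactness you correctly flag the difficulty of transporting compactness of the diagonally compact sets to the finer topology but do not resolve it; to be fair, the paper's own proof of item (4) exhibits only the countable base and is silent on local compactness, so you are not worse off there, but as written neither your sketch nor your appeal to fibrewise agreement closes that step.
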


\begin{proof}
According Proposition \ref{Wn}, $G$ admits a countable $G$-uniformity $%
\mathcal{W}=\left\{ W_{n}\right\} _{n\in \mathbb{N}}$ compatible with the
topology of the $r$-fibres such that for every $n\in \mathbb{N}$, $W_{n}$ is
a diagonally compact neighborhood of $G^{\left( 0\right) }$. Let $\tilde{\tau%
}=\tau _{\mathcal{W}}$. \ 

Then \cite[Proposition 3.6]{bu4} and \cite[Proposition 3.7]{bu4} show $1$
and \cite[Proposition 3.8]{bu4} implies $2$ and $3$.

Let us fix $u\in G^{\left( 0\right) }$ and let $\left\{ x_{n},~n\in \mathbb{N%
}\right\} $ be a dense subset of $G^{u}\,$.

$4$. For each $n$, let $\overset{\circ }{W_{n}}$ denote the interior of $%
W_{n}$ with respect to $\tau $.\ We claim $\left\{ \overset{\circ }{W}%
_{k}x_{m}^{-1}x_{n}\overset{\circ }{W}_{k}:~m,n,k\in \mathbb{N}\right\} $ is
a countable base for $\tilde{\tau}=\tau _{\mathcal{W}}$. \ Let us prove that
for every $n$, $m$, $k\in \mathbb{N}$ and $x\in G$, $\overset{\circ }{W}_{k}x%
\overset{\circ }{W}_{k}$is an open set with respect to $\tau _{\mathcal{W}}$%
. \ Let $s\in \overset{\circ }{W}_{k}\cap G_{r\left( x\right) }$and $t\in 
\overset{\circ }{W}_{k}\cap G^{d\left( x\right) }$. Since $r\left( s\right)
s\in \overset{\circ }{W}_{k}$and $\left( G,\tau \right) $ is a topological
groupoid, there is an open neighborhood $V$ of $r\left( s\right) $ (with
respect to $\tau $) such that $Vs\subset \overset{\circ }{W}_{k}$. The fact
that $\mathcal{W}=\left\{ W_{n}\right\} _{n\in \mathbb{N}}$ is compatible
with the topology of the $r$-fibres implies that there is $p\in \mathbb{N}$
such that $W_{p}\cap G^{r\left( s\right) }\subset V^{-1}\cap G^{r\left(
s\right) }$ and consequently, $W_{p}\cap G_{r\left( s\right) }\subset V\cap
G_{r\left( s\right) }$. \ Hence 
\begin{equation*}
W_{p}s\subset \left( W_{p}\cap G_{r\left( s\right) }\right) s\subset
Vs\subset \overset{\circ }{W}_{k}\text{.}
\end{equation*}%
Similarly, there $q\in \mathbb{N}$ such that $tW_{q}\subset \overset{\circ }{%
W}_{k}$. If $r\in \mathbb{N}$ is such that $W_{r}\subset W_{p}\cap W_{q}$,
then 
\begin{equation*}
sxt\in \overset{\circ }{W}_{r}sxt\overset{\circ }{W}_{r}\subset \overset{%
\circ }{W}_{k}x\overset{\circ }{W}_{k}\text{.}
\end{equation*}

Thus $\overset{\circ }{W}_{k}x\overset{\circ }{W}_{k}$is an open set with
respect to $\tau _{\mathcal{W}}$. \ 

Let us prove that $\left\{ \overset{\circ }{W}_{k}x_{m}^{-1}x_{n}\overset{%
\circ }{W}_{k}:~m,n,k\in \mathbb{N}\right\} $ is a base for $\tilde{\tau}%
=\tau _{\mathcal{W}}$. Indeed, let $x\in G$ and let $A$ be an open subset of 
$G$ with respect to $\tau _{\mathcal{W}}$ such that $x\in A$. Then there is $%
k\in \mathbb{N}$ such that $x\in W_{k}xW_{k}\subset A$. Let $r\in \mathbb{N}$
such that $W_{r}W_{r}\subset W_{k\,}$. Proposition \ref{dens} implies that $%
\left\{ x_{m}^{-1}x_{n}:~m,n\in \mathbb{N}\right\} $ is dense in $G$ with
respect to $\tau _{\mathcal{W}}$. Hence there are $m,n\in \mathbb{N}$ such
that $x_{m}^{-1}x_{n}\in \overset{\circ }{W}_{r}x\overset{\circ }{W}_{r}$,
or equivalently, $x\in \overset{\circ }{W}_{r}x_{m}^{-1}x_{n}\overset{\circ }%
{W}_{r}$. Therefore 
\begin{equation*}
x\in \overset{\circ }{W}_{r}x_{m}^{-1}x_{n}\overset{\circ }{W}_{r}\subset
W_{r}x_{m}^{-1}x_{n}W_{r}\subset W_{r}\overset{\circ }{W}_{r}x\overset{\circ 
}{W}_{r}W_{r}\subset W_{k}xW_{k}\subset A\text{.}
\end{equation*}

$5.$ \ Since the topology $\tilde{\tau}$ is finer than $\tau $, it suffices
to prove that each open set with respect to $\tilde{\tau}$ belong to the
Borel structure generated by $\tau $. \ But as we have proved every open set
with respect to $\tilde{\tau}$ is a countable union of sets of the form $%
W_{r}x_{m}^{-1}x_{n}W_{r}$ which are compact with respect to $\tau $
(because $W_{r}$ is diagonally compact).
\end{proof}

\end{document}